\documentclass[draft]{article}

\usepackage{amsmath, amssymb,amsthm, amstext}
\usepackage[all]{xy}

\parindent = 0.0 cm

\newtheoremstyle{break}% name
   {9pt}%      Space above, empty = `usual value'
   {9pt}%      Space below
   {\itshape}% Body font
   {}%         Indent amount (empty = no indent, \parindent = para indent)
   {\bfseries}% Thm head font
   {.}%        Punctuation after thm head
   {0.5em}% Space after thm head: \newline = linebreak
   {}%         Thm head spec

\newtheoremstyle{definitionbreak}% name
   {9pt}%      Space above, empty = `usual value'
   {9pt}%      Space below
   {\rm}% Body font
   {}%         Indent amount (empty = no indent, \parindent = para indent)
   {\bfseries}% Thm head font
   {.}%        Punctuation after thm head
   {0.5em}% Space after thm head: \newline = linebreak
   {}%         Thm head spec

\theoremstyle{break}
\newtheorem{thm}{Proposition}[section]
\newtheorem{them}[thm]{Theorem}
\newtheorem{lem}[thm]{Lemma}
\newtheorem{cor}[thm]{Corollary}

\theoremstyle{definitionbreak}
\newtheorem{definition}[thm]{Definition}
\newtheorem{rem}[thm]{Remark}

\newtheorem{r&n}[thm]{Remark and Notation}
\newtheorem{ex}[thm]{Example}

\newcommand\Nat{\mathbb{N}_0}

\newcommand\NN{\mathbb{N}}
\newcommand\PP{\mathbb{P}}
\newcommand\defin{\mathrel{\mathop:}=}
\newcommand\sO{{\cal O}}
\newcommand{\ul}[1]{\underline{#1}}
\newcommand{\res}{\!\!\upharpoonright}

\DeclareMathOperator{\Sym}{Sym}
\DeclareMathOperator{\Fitt}{Fitt}
\DeclareMathOperator{\im}{im}
\DeclareMathOperator{\coker}{coker}

\begin{document}
 
\title{Some remarks on equations defining coincident root loci}
\author{Simon Kurmann}
%\ead{simon.kurmann@math.uzh.ch}
%\address{Institut f{\"u}r Mathematik, Universit{\"a}t Z{\"u}rich, Winterthurerstrasse 190, CH-8057 Z{\"u}rich}
\date{}

\maketitle

\begin{abstract}
 Let $X_\lambda$ be the projective variety of binary forms of degree $d$ whose linear factors are distributed according to the partition $\lambda$ of $d$. We determine minimal sets of local generators of $X_\lambda\times Y_\lambda$, where $Y_\lambda$ is the normalization of $X_\lambda$, and we show that the local Jacobian matrices of $X_\lambda\times Y_\lambda$ contain the product of the identity matrix of maximal rank with a unit. We use this to fill a gap in a crucial proof in Chipalkatti's ``On equations defining Coincident Root Loci''. Also, we give a new description of the singular locus of $X_\lambda$ and a criterion for the smoothness of $X_\lambda$.
\end{abstract}

\section{Introduction}\label{Intr}

We consider binary forms \[F(x,y) = \sum_{j=1}^da_jx^{d-j}y^j\] of degree $d \in \NN$ over an algebraically closed field $K$ of characteristic zero. Since we 
only are interested in the roots of such binary forms, we throughout identify binary forms if they are equal up to multiplication with units $K^\ast$. A binary form $F(x,y)$ of degree $d$ is the product of $d$ linear forms $L_1, \ldots, L_d \in K[x,y]_1$, and it is well-known that two of these linear factors are equal if and only if the discriminant of $F$ vanishes. More generally, we may ask for conditions on the coefficients of $F$ under which the linear factors of $F$ are distributed according to a partition $(\lambda_1, \ldots, \lambda_e)$ of $d$, that is $F = \prod_{i=1}^eL_i^{\lambda_i}$ for some linear forms $L_1, \ldots, L_e$. This is a classical question, dating back at least to the work of Arthur Cayley (see \cite{C}, also compare \cite{Chi}). More recently, Jerzy Weyman gave a detailed account of the case of one root with multiplicity $p \in \{1, \ldots, d\}$ (see \cite{W1} and \cite{W2}). For a beautiful invariant theoretic treatment, see \cite{Chi2}. A more geometric approach can be found in \cite{Katz}.\\
Fix a partition $\lambda = (\lambda_1, \ldots, \lambda_e)$ of $d \in \NN$, and let $V = K[x,y]_1$ be the vector space of linear forms in $x$ and $y$ over $K$. We identify a binary form $F = \sum_{j=1}^da_jx^{d-j}y^j$ with the closed point $(a_0: \cdots: a_d)$ in the projective $d$-space $\PP(\Sym^dV)$ over $K$. The set of binary forms whose linear factors are distributed according to $\lambda$ then is \[X_\lambda = \left\{(a_0:\cdots:a_d) \in \PP^d \mid \exists L_1, \ldots, L_e \in K[x,y]_1: \sum_{j=1}^da_jx^{d-j}y^j = \prod_{i=1}^eL_i^{\lambda_i}\right\},\] a projective variety in $\PP(\Sym^dV)$. This is the intuitive definition of the coincident root locus (CRL) in the title. A scheme-theoretic satisfactory definition can be found in Jaydeep Chipalkatti's article \cite{Chi}: We write $\lambda = (1^{e_1} \, 2^{e_2} \ldots d^{e_d})$ where $e_r$ is the number of $\lambda_i$ that equal $r$, and we set $Y_\lambda \defin \prod_{r=1}^d\PP(\Sym^{e_r}(V))$. For $k,l \in \Nat$, the identification of closed points of $\PP(\Sym^{e_r}V)$ with binary forms of degree $e_r$ induces a multiplication map \[\PP(\Sym^kV)\times \PP(\Sym^lV) \to \PP(\Sym^{k+l}V), (G,H) \mapsto G\cdot H,\] and we use this to define a morphism of schemes \[f_\lambda:\,  Y_\lambda \to \PP(\Sym^dV), \, (G_1, \ldots, G_d) \mapsto \prod_{r=1}^dG_r^r\] (observe that $\sum_{r=1}^de_rr = d$).

\begin{definition}
 The {\it coincident root locus (with multiplicities $\lambda$)} is the integral projective scheme $X_\lambda \defin \im(f_\lambda)$.
\end{definition}

Note that $Y_\lambda$ is the normalization of $X_\lambda$. It is straightforward to check that this definition of $X_\lambda$ indeed yields the same set as the description above. It allows Chipalkatti to apply scheme-theoretic and homological methods to the the study of $X_\lambda$, which he uses to great effect for the description of the equations defining $X_\lambda$. In doing so, he also studies the closed subscheme $\Gamma_\lambda \defin X_\lambda\times Y_\lambda$ of $T \defin \PP(\Sym^dV) \times Y_\lambda$, resulting in \cite[Theorem 3.1]{Chi}, which states that $\Gamma_\lambda$ is resolved by the Eagon-Northcott complex of a morphism $\varphi: \sO_T^{d+1} \to \sO_T^2$. Since $X_\lambda$ is closely related to $\Gamma_\lambda$, this is a pleasant result, and it is crucial for the remainder of \cite{Chi}; but Alas!, there seems to be a gap in its proof. Indeed, to prove \cite[Theorem 3.1]{Chi} it suffices to show that $\Gamma_\lambda$ equals the subscheme $T_\varphi$ defined by the Fitting ideal sheaf $\Fitt_0(\coker\varphi)$ (compare \cite[Chapter 20.2 and Appendix 2.6]{E}). In \cite{Chi}, this equality is only shown set-theoretically, that is on the sets of closed points, while scheme-theoretic equality is needed for $\Gamma_\lambda$ to have the same free resolution as $T_\varphi$. This paper arose from the desire to fill this gap. The results in it are also part of my doctoral thesis, in which more details and examples can be found.\\
So, let us have a closer look at above morphism $\varphi$; we will do so in a different way than \cite{Chi}. Our approach is more constructive and immediately yields local defining equations for $\Gamma_\lambda$: We define $\varphi$ on open affine sets of $T$ and glue. The explicit constructions can be found in Section \ref{expl}. For ease of notation, we write $e_0 \defin d$ and $\Lambda \defin \{0, \ldots, e_0\}\times\cdots\times\{0, \ldots, e_d\}$. For a multi-index $\underline\alpha = (\alpha_0, \ldots, \alpha_d) \in \Lambda$, set $U_{\underline\alpha} \defin U_{\alpha_0}\times\cdots\times U_{\alpha_d}$, where $U_{\alpha_r}$ denotes the $\alpha_r$-th standard affine chart on $\PP(\Sym^{e_r}V)$. Then, $(U_{\ul\alpha})_{\ul\alpha \in \Lambda}$ is an affine covering of $T$. There are $d+1$ polynomials $\Theta_0, \ldots, \Theta_d$ in $\sum_{r=1}^de_r+1 = e+d$ coordinates on $\PP(\Sym^{e_1}V), \ldots, \PP(\Sym^{e_d}V)$ such that for binary forms $G_r = (b_{r,0}:\cdots:b_{r,e_r}) \in \PP(\Sym^{e_r}V)$ for $r \in \{1, \ldots, d\}$, it holds 
\begin{equation}\label{Theta} \prod_{r=1}^dG_r^r = \sum_{j=0}^d\Theta_j(b_{1,0}, \ldots, b_{1,e_1},b_{2,0}, \ldots, b_{d,e_d})x^{d-j}y^j. \end{equation}
Note that the polynomial $\Theta_j$ is not a global section on $\prod_{r=1}^d\PP(\Sym^{e_r}V)$ but on $\PP(\Sym^{e+d}V)$ for $j \in \{0, \ldots, d\}$. Still, we can restrict $\Theta_j$ to the affine sets $U_{\ul\alpha}$ since $U_{\ul\alpha}$ also can be considered as an affine chart of $\PP(\Sym^{e+d}V)$; denote by $\theta_{j,\ul\alpha}$ this restriction to $U_{\ul\alpha}$ for $\ul\alpha \in \Lambda$. If the index $\ul\alpha$ is clear from the context, we might omit it and just write $\theta_j = \theta_{j,\ul\alpha}$. Denote by \[z_{0,\alpha_0}, \ldots, z_{\alpha_0-1,\alpha_0}, z_{\alpha_0+1,\alpha_0}, \ldots z_{d,\alpha_0} \in \sO(U_{\alpha_0})\] coordinates on the affine chart $U_{\alpha_0}$; again, we neglect the index $\alpha_0$ and write $z_j = z_{j,\alpha_0}$ if this does not lead to confusion. Observe that we can identify \[\sO(U_{\ul\alpha}) = \sO\Big(\prod_{r=1}^dU_{\alpha_r}\Big)[z_0, \ldots, z_d] = \Big(\bigotimes_{r=1}^d\sO(U_{\alpha_r})\Big)[z_0, \ldots, z_d].\] We define a morphism $\varphi_{\ul\alpha}: \sO_{U_{\ul\alpha}}^{d+1} \to \sO_{U_{\ul\alpha}}^2$ by the matrix \[(\varphi_{\ul\alpha}) \,\defin\, \left(\begin{array}{cccccccc}\theta_{0,\ul\alpha} & \ldots & \theta_{\alpha_0-1,\ul\alpha} & \theta_{\alpha_0,\ul\alpha} & \theta_{\alpha_0+1,\ul\alpha} &\ldots & \theta_{d,\ul\alpha}\\ z_{0,\alpha_0} & \ldots & z_{\alpha_0-1,\alpha_0} & 1 & z_{\alpha_0+1,\alpha_0} & \ldots, & z_{d,\alpha_0}\end{array}\right).\] If $q \in U_{\ul\alpha}\cap U_{\ul\alpha'}$ for $\ul\alpha,\ul\alpha' \in \Lambda$, then we can check that $\varphi_{\ul\alpha}(q) = z_{\alpha_0',\alpha_0}(q)\varphi_{\ul\alpha'}(q)$. Hence, the $\varphi_{\ul\alpha}$ glue to a morphism \[\varphi: \sO_T^{d+1} \to \sO_T^2.\] As $T_\varphi$ is defined by $\Fitt_0(\coker\varphi)$, the $2\times2$-minors of $(\varphi_{\ul\alpha})$ generate the ideal of $T_\varphi\cap U_{\ul\alpha}$ for all $\alpha\in\Lambda$.

\begin{lem}\label{not0}
 For all $\ul\alpha\in\Lambda$ and all closed points $q \in U_{\ul\alpha}$, neither of the two rows of the matrix $(\varphi_{\ul\alpha})(q)$ vanishes.
\end{lem}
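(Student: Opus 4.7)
The claim splits cleanly into two parts, one for each row, and both are essentially book-keeping once the affine chart conventions are unpacked.

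The bottom row is immediate: by definition of the matrix $(\varphi_{\ul\alpha})$, its $\alpha_0$-th entry is the constant $1$, so the evaluation at any closed point $q$ still contains a $1$ and hence cannot be the zero vector. I would dispose of this in a single sentence.

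For the top row the plan is to trace through what $\theta_{j,\ul\alpha}(q)$ actually computes. By construction, a closed point $q \in U_{\ul\alpha}$ projects to a tuple $(G_1,\ldots,G_d) \in \prod_{r=1}^d U_{\alpha_r}$, where each $G_r = \sum_{k=0}^{e_r} b_{r,k}(q)\, x^{e_r-k}y^k$ has $b_{r,\alpha_r}(q)=1$ by the normalization defining the $\alpha_r$-th standard affine chart on $\PP(\Sym^{e_r}V)$. In particular $G_r$ is a nonzero binary form of degree $e_r$ for every $r\in\{1,\ldots,d\}$. Now the defining identity (\ref{Theta}) specializes at $q$ to
\[\prod_{r=1}^d G_r^{\,r} \;=\; \sum_{j=0}^d \theta_{j,\ul\alpha}(q)\, x^{d-j}y^j,\]
so the entries of the first row of $(\varphi_{\ul\alpha})(q)$ are exactly the coefficients of the binary form $\prod_r G_r^{\,r}$.

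Since $K[x,y]$ is an integral domain and each $G_r$ is nonzero, the product $\prod_r G_r^{\,r}$ is a nonzero element of $K[x,y]_d$, and therefore its coefficient vector is not identically zero. Consequently at least one $\theta_{j,\ul\alpha}(q)$ is nonzero, which is precisely the statement that the first row of $(\varphi_{\ul\alpha})(q)$ does not vanish. There is no real obstacle here; the only thing that requires care is the identification of $\theta_{j,\ul\alpha}(q)$ with the coefficients of $\prod_r G_r^{\,r}$, which follows from the explicit construction of the $\Theta_j$ via (\ref{Theta}) together with the normalization $b_{r,\alpha_r}=1$ on $U_{\alpha_r}$.
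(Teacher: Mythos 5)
Your proof is correct, and it takes a genuinely different and noticeably shorter route than the paper. The paper handles the second row in the same one-line way, but for the first row it argues by contradiction using the explicit monomial expansion of the $\Theta_j$ developed in Section \ref{expl}: assuming all $\theta_{j,\ul\alpha}(q)=0$, it builds a descending chain of nonempty index sets $M_0\supseteq M_1\supseteq\cdots$ of those $r$ whose successive coefficients $b_{r,0},b_{r,1},\ldots$ vanish, and derives a contradiction with the fact that $(b_{r,0}:\cdots:b_{r,e_r})$ is a closed point of $\PP(\Sym^{e_r}V)$ and so cannot have all coordinates zero; this is why the proof had to be postponed until after the explicit formulas for the $\Theta_j$ were available. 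Your argument bypasses all of that: you observe that, on the chart $U_{\ul\alpha}$ with the normalization $b_{r,\alpha_r}=1$, the identity (\ref{Theta}) says the first row of $(\varphi_{\ul\alpha})(q)$ is precisely the coefficient vector of $\prod_{r=1}^d G_r^{\,r}$, and since each $G_r$ is a nonzero form and $K[x,y]$ is a domain, this product is a nonzero element of $K[x,y]_d$. This is exactly the content hiding inside the paper's chain argument, which is in effect a hands-on reproof (via leading coefficients) that a product of nonzero binary forms is nonzero. What your approach buys is independence from the technical coefficient formulas of Section \ref{expl}, so the lemma could be proved right where it is stated in Section \ref{Intr}; what the paper's approach buys is only that it stays entirely inside the explicit expansion machinery the author develops anyway for other purposes. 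The single point that deserves the care you already give it is the identification $\theta_{j,\ul\alpha}(q)=\Theta_j(\ldots,b_{r,t},\ldots)$ with $b_{r,\alpha_r}=1$, which is exactly how the restriction $\theta_{j,\ul\alpha}$ is defined, so there is no gap.
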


As the prove of this Lemma is very technical, we postpone it until Section 3.

\begin{thm}\label{Prop}
 For all $\ul\alpha \in \Lambda$, the section $\theta_{\alpha_0} \in \sO(U_{\ul\alpha})$ vanishes nowhere on $T_\varphi\cap U_{\ul\alpha}$, and hence its restriction to $\sO(T_\varphi\cap U_{\ul\alpha})$ is a unit. Moreover, the ideal of $T_\varphi\cap U_{\ul\alpha}$ in $\sO(U_{\ul\alpha})$ is generated by the regular sequence \[\theta_{\alpha_0}z_{0}-\theta_{0}, \ldots, \theta_{\alpha_0}z_{\alpha_0-1}-\theta_{\alpha_0-1}, \theta_{\alpha_0}z_{\alpha_0+1}-\theta_{\alpha_0+1}, \ldots, \theta_{\alpha_0}z_{d}-\theta_{d}.\]
\end{thm}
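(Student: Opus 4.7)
The plan is to establish the three claims in sequence: non-vanishing of $\theta_{\alpha_0}$ from Lemma \ref{not0}, identification of the ideal by a small algebraic computation, and the regular sequence property via Cohen--Macaulayness and a height calculation.

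For the non-vanishing, I would take a closed point $q \in T_\varphi \cap U_{\ul\alpha}$ and note that all $2\times 2$-minors of $(\varphi_{\ul\alpha})(q)$ vanish; in particular the minor of columns $\alpha_0$ and $j$ gives $\theta_{\alpha_0}(q) z_j(q) = \theta_j(q)$ for every $j \neq \alpha_0$. Were $\theta_{\alpha_0}(q) = 0$, this would force $\theta_j(q) = 0$ for all $j \neq \alpha_0$, making the entire top row of $(\varphi_{\ul\alpha})(q)$ vanish and contradicting Lemma \ref{not0}. Since $T_\varphi \cap U_{\ul\alpha}$ is of finite type over the algebraically closed field $K$, hence Jacobson, pointwise non-vanishing on closed points implies that $\theta_{\alpha_0}$ restricts to a unit in $\sO(T_\varphi \cap U_{\ul\alpha})$.

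Setting $g_j \defin \theta_{\alpha_0}z_j - \theta_j$ for $j \neq \alpha_0$, each $g_j$ is the $2\times 2$-minor of $(\varphi_{\ul\alpha})$ using columns $\alpha_0$ and $j$. For indices $i, j \neq \alpha_0$, the minor of columns $i$ and $j$ expands directly as
\[\theta_i z_j - \theta_j z_i \;=\; z_i g_j - z_j g_i,\]
so every such minor lies in $(g_j : j \neq \alpha_0)$. Consequently the ideal $I$ of $T_\varphi \cap U_{\ul\alpha}$ in $\sO(U_{\ul\alpha})$ is generated by the $d$ elements $g_j$.

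For the regular sequence statement, I would rely on the fact that $\sO(U_{\ul\alpha})$ is a polynomial ring over $K$ of dimension $e+d$, hence Cohen--Macaulay; it therefore suffices to show $\operatorname{ht} I = d$. Krull's height theorem gives $\operatorname{ht} I \leq d$. For the reverse inequality, the first step shows $V(I) \subseteq D(\theta_{\alpha_0})$, and on $D(\theta_{\alpha_0})$ each generator factors as $g_j = \theta_{\alpha_0}(z_j - \theta_j/\theta_{\alpha_0})$, so the ideal equals $(z_j - \theta_j/\theta_{\alpha_0} : j \neq \alpha_0)$ there. This yields $V(I) \cong \operatorname{Spec} R[1/\theta_{\alpha_0}]$ with $R \defin \bigotimes_{r=1}^d \sO(U_{\alpha_r})$. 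Evaluating at $G_r = (x+y)^{e_r}$ for all $r$ gives $\theta_{\alpha_0} = \binom{d}{\alpha_0} \neq 0$, so $\theta_{\alpha_0}$ is a non-zero element of the integral domain $R$; hence $\dim V(I) = \dim R = e$ and $\operatorname{ht} I = d$. The main obstacle is precisely this final dimension count: one must ensure that inverting $\theta_{\alpha_0}$ does not cause a dimensional collapse, which is exactly why an explicit evaluation showing $\theta_{\alpha_0} \neq 0$ as a polynomial is needed.
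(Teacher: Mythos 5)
Your argument is correct, and it diverges from the paper's proof only in how the regular-sequence property is justified. The first two steps are the same as in the paper: the non-vanishing of $\theta_{\alpha_0}$ on closed points via Lemma \ref{not0} (the paper phrases it through the rank of $(\varphi_{\ul\alpha})(q)$, you through the vanishing of the minors with column $\alpha_0$ — the same computation), and the identity $\theta_i z_j - \theta_j z_i = z_i(\theta_{\alpha_0}z_j-\theta_j) - z_j(\theta_{\alpha_0}z_i-\theta_i)$ showing that the $d$ elements $\theta_{\alpha_0}z_j-\theta_j$ generate the Fitting ideal. For the last step the paper argues directly and rather tersely that the sequence is regular because $\theta_{\alpha_0}$ is a unit modulo $I_{\ul\alpha}$ and the $z_j$ do not occur in the $\theta_j$; you instead invoke Cohen--Macaulayness of the polynomial ring $\sO(U_{\ul\alpha})$ together with the height count $\operatorname{ht} I_{\ul\alpha} = d$, obtained from the identification $\sO(T_\varphi\cap U_{\ul\alpha}) \cong R[1/\theta_{\alpha_0}]$ with $R = \bigotimes_r\sO(U_{\alpha_r})$ and the standard fact that in a Cohen--Macaulay ring $n$ elements generating an ideal of height $n$ form a regular sequence. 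This buys some robustness: your route makes explicit that $I_{\ul\alpha}$ is a proper ideal and that inverting $\theta_{\alpha_0}$ causes no dimensional collapse (the point where the argument could silently fail), facts the paper leaves implicit, at the cost of importing unmixedness-type machinery where the paper's triangular structure of the generators suffices. Two small remarks: the value of $\theta_{\alpha_0}$ at the point with all $G_r = (x+y)^{e_r}$ is $\binom{d}{\alpha_0}\big/\prod_r\binom{e_r}{\alpha_r}^r$ rather than $\binom{d}{\alpha_0}$, but it is nonzero in either normalization (indeed all coefficients $\beta(\ul{\ul\nu})$ are positive), which is all you need; and when quoting the Cohen--Macaulay criterion you should cite it in a form valid for non-local rings (e.g.\ via the unmixedness theorem, which guarantees that each initial segment of the generators has the right height), since regularity of a sequence is a global condition here.
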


\begin{proof}
 Let $\ul\alpha\in\Lambda$. By \cite[20.2]{E}, the set $|T_\varphi\cap U_{\ul\alpha}|$ is the set of closed points $q \in U_{\ul\alpha}$ such that the matrix $(\varphi_{\ul\alpha})(q)$ is not of maximal rank. Hence, the definition of $(\varphi_{\ul\lambda})$ and Lemma \ref{not0} yield  $\theta_{\alpha_0}(q) \neq 0$, and the rational function $\theta_{\alpha_0}$ vanishes nowhere on $T_\varphi\cap U_{\ul\alpha}$. For the second claim, note that \[\theta_jz_l - \theta_lz_j = z_j(\theta_{\alpha_0}z_l-\theta_l) - z_l(\theta_{\alpha_0}z_j-\theta_j)\] for all $j,l \in \{0, \ldots, d\}$. Thus, the ideal $I_{\ul \alpha}$ of $2\times 2$-minors of $(\varphi_{\ul\alpha})$ is generated by the elements $\theta_{\alpha_0}z_j- \theta_j$ for $j \in \{0, \ldots, d\}\backslash\{\alpha_0\}$. As $\theta_{\alpha_0}$ is a unit modulo $I_{\ul\alpha}$ and the indeterminates $z_0, \ldots, z_d$ of $\sO(U_{\ul\alpha})$ do not occur in $\theta_0, \ldots, \theta_d$, the generators $\theta_{\alpha_0}z_j- \theta_j$ form a regular sequence as claimed.
\end{proof}

The above Proposition is the key to finish the proof of \cite[Theorem 3.1]{Chi} as it allows us to show that $T_\varphi$ is smooth.

\begin{cor}
 For any closed points $q \in T_\varphi$, the Jacobian matrix $J(q)$ of $T_\varphi$ at $q$ contains $\theta E_d$ as a submatrix, where $\theta \in K^\ast$ and $E_d$ is the identity matrix of rank $d$. In particular, $T_\varphi$ is smooth.
\end{cor}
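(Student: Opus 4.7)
The strategy is to use the explicit local generators from Proposition \ref{Prop} and exploit the fact that the coordinates $z_0,\ldots,z_d$ on $U_{\alpha_0}$ do not appear in any of the $\theta_j$.

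Fix a closed point $q \in T_\varphi$ and choose $\ul\alpha \in \Lambda$ with $q \in U_{\ul\alpha}$. By Proposition \ref{Prop}, the ideal of $T_\varphi\cap U_{\ul\alpha}$ in $\sO(U_{\ul\alpha})$ is generated by the $d$-element regular sequence
\[
g_j \,\defin\, \theta_{\alpha_0}z_j - \theta_j, \qquad j \in \{0,\ldots,d\}\setminus\{\alpha_0\}.
\]
The coordinates on $U_{\ul\alpha}$ split into the $d$ variables $z_j$ (for $j\neq\alpha_0$) on $U_{\alpha_0}$ and the remaining coordinates $b_{r,s}$ on the factors $U_{\alpha_r}$, $r\geq 1$. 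Crucially, the sections $\theta_0,\ldots,\theta_d$ involve only the $b_{r,s}$ and not the $z_j$.

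Consequently, for $j,l\in\{0,\ldots,d\}\setminus\{\alpha_0\}$,
\[
\frac{\partial g_j}{\partial z_l} \;=\; \theta_{\alpha_0}\,\delta_{jl},
\]
so the submatrix of $J(q)$ consisting of the columns indexed by the $z_l$ (with $l\neq\alpha_0$) and all $d$ rows is precisely $\theta_{\alpha_0}(q)\,E_d$. By Proposition \ref{Prop}, $\theta_{\alpha_0}(q)\in K^\ast$, proving the first claim with $\theta \defin \theta_{\alpha_0}(q)$.

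For smoothness, note that $T_\varphi\cap U_{\ul\alpha}$ is cut out of the smooth scheme $U_{\ul\alpha}$ by a regular sequence of length $d$, hence has pure codimension $d$ at every point. Since $J(q)$ contains the invertible submatrix $\theta E_d$, its rank equals $d$, which matches the codimension. The Jacobian criterion therefore shows that $T_\varphi$ is smooth at $q$. As $q$ was arbitrary, $T_\varphi$ is smooth. The only non-routine input is the non-vanishing of $\theta_{\alpha_0}$ on $T_\varphi\cap U_{\ul\alpha}$, which is already supplied by Proposition \ref{Prop}; everything else is a direct differentiation.
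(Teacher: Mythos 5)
Your proof is correct and follows essentially the same route as the paper: differentiate the generators $\theta_{\alpha_0}z_j-\theta_j$ from Proposition \ref{Prop} with respect to the $z_l$ (using that the $\theta_j$ do not involve these variables) to exhibit $\theta_{\alpha_0}(q)E_d$ inside $J(q)$, then invoke the Jacobian criterion. The only difference is that you spell out the codimension-$d$ bookkeeping coming from the regular sequence, which the paper leaves implicit.
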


\begin{proof}
 Let $q \in T_\varphi\cap U_{\ul\alpha}$ for some $\ul\alpha \in \Lambda$. Then, \[\frac{\partial \theta_{\alpha_0}z_{j}-\theta_{j}}{\partial z_l} \quad=\quad \left\{\begin{array}{cl} \theta_{\alpha_0} & \text{if } j = l,\\ 0 & \text{else} \end{array}\right.\] for all $j,l \in \{0, \ldots, d\}\backslash\{\alpha_0\}$, and Proposition \ref{Prop} yields the claim about the Jacobian matrix. The smoothness of $T_\varphi$ now follows by the Jacobi criterion.
\end{proof}

For the Jacobian matrix, see \cite[Chapter 16.6]{E}.

\begin{them}
 $T_\varphi = \Gamma_\lambda$ as schemes.
\end{them}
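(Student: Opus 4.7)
The plan is to deduce $T_\varphi = \Gamma_\lambda$ as schemes from three ingredients: a scheme-theoretic containment $\Gamma_\lambda \subseteq T_\varphi$, the set-theoretic equality already established in \cite{Chi}, and the smoothness (hence reducedness) of $T_\varphi$ provided by the Corollary above.

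First, I would verify $\Gamma_\lambda \subseteq T_\varphi$ as closed subschemes of $T$. On each chart $U_{\ul\alpha}$, the proof of Proposition \ref{Prop} shows that the ideal of $T_\varphi \cap U_{\ul\alpha}$ is generated by the polynomials $\theta_{\alpha_0}z_j - \theta_j$ for $j\neq\alpha_0$. These pull back to zero along the morphism $(f_\lambda,\mathrm{id}) : Y_\lambda \to T$ whose scheme-theoretic image (equivalently, whose graph) is $\Gamma_\lambda$, because $\prod_r G_r^r$ has $j$-th coefficient $\theta_j(G)$ and hence has coordinates $\theta_j(G)/\theta_{\alpha_0}(G)$ in the chart $U_{\alpha_0}$. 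Thus each generator of the ideal of $T_\varphi$ lies in the ideal of $\Gamma_\lambda$, so $\Gamma_\lambda \subseteq T_\varphi$.

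Second, by \cite{Chi} we have the set-theoretic identity $|T_\varphi| = |\Gamma_\lambda|$; it is precisely this partial result whose upgrade to a scheme-theoretic equality motivates the present paper. Since the Corollary above certifies $T_\varphi$ to be smooth and in particular reduced, the scheme-theoretic equality then follows from a general fact: if $A \subseteq B$ are closed subschemes of a Noetherian scheme with $|A|=|B|$ and $B$ reduced, then $I(B) = \sqrt{I(B)} = \sqrt{I(A)} \supseteq I(A) \supseteq I(B)$ forces $I(A)=I(B)$. Applied to $A = \Gamma_\lambda$, $B = T_\varphi$, this yields the theorem.

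The main obstacle is the rigor of the first step, namely the scheme-theoretic (rather than merely set-theoretic) inclusion $\Gamma_\lambda \subseteq T_\varphi$. It hinges on identifying $\Gamma_\lambda$ with the scheme-theoretic image of the natural morphism $Y_\lambda \to T$ and verifying that the $\theta_{\alpha_0}z_j-\theta_j$ vanish on that image on the nose, not just at closed points. Once this is secured, the remainder of the argument is purely formal and relies only on the smoothness of $T_\varphi$ already extracted in the Corollary.
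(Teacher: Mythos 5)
Your argument is correct, but it finishes differently from the paper, so a comparison is worthwhile. The paper's proof re-derives the set-theoretic equality $|T_\varphi|=|\Gamma_\lambda|$ directly from the definition of $\theta_0,\ldots,\theta_d$ (together with Lemma \ref{not0}, which excludes points where the whole first row of $(\varphi_{\ul\alpha})$ vanishes) and then concludes by noting that \emph{both} closed subschemes are smooth --- $T_\varphi$ by the Corollary, $\Gamma_\lambda$ because it is isomorphic to $Y_\lambda$ --- hence both are reduced and both coincide with the reduced induced structure on their common support. You instead verify the scheme-theoretic containment $\Gamma_\lambda\subseteq T_\varphi$ by pulling back the generators $\theta_{\alpha_0}z_j-\theta_j$ along $(f_\lambda,\mathrm{id})$ (your reading of $\Gamma_\lambda$ as the graph, i.e.\ the scheme-theoretic image of $(f_\lambda,\mathrm{id})$, is indeed the intended meaning, as the paper's use of $\Gamma_\lambda\cong Y_\lambda$ shows), which lets you get by with the reducedness of $T_\varphi$ alone; this is a sound and slightly more self-contained variant, and the computation $I(T_\varphi)=\sqrt{I(T_\varphi)}=\sqrt{I(\Gamma_\lambda)}\supseteq I(\Gamma_\lambda)\supseteq I(T_\varphi)$ is correct. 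The one point to repair is your sourcing of the set-theoretic identity: the closing Remark of the paper warns that the morphism $\varphi$ constructed here need not coincide with the one in \cite[Section 3.2]{Chi}, so quoting \cite{Chi} for $|T_\varphi|=|\Gamma_\lambda|$ is not quite legitimate for this $\varphi$. This is easily fixed within the paper's own toolkit: if $(\varphi_{\ul\alpha})(q)$ has rank less than $2$, then by Lemma \ref{not0} neither row vanishes, so the rows are proportional with factor $\theta_{\alpha_0}(q)\neq 0$, giving $z_j(q)=\theta_j(q)/\theta_{\alpha_0}(q)$ for all $j$, i.e.\ $F=\prod_{r=1}^d G_r^r$; this is exactly how the paper argues, so the gap is cosmetic rather than substantive.
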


\begin{proof}
 Closed points of $\Gamma_\lambda$ are of the form $(F,G_1, \ldots, G_r)$ with $F = \prod_{r=1}^dG_r^r$. As $T_\varphi$ on the affine chart $U_{\ul\alpha}$ for $\ul\alpha \in \Lambda$ is the set of closed points $q$ such that the matrix $(\varphi_{\ul\alpha})(q)$ is of rank less than 2, the definition of $\theta_0, \ldots, \theta_d$ shows that  on the sets of closed points $T_\varphi=\Gamma_\lambda$. Since $\Gamma_\lambda \cong Y_\lambda$, the scheme $\Gamma_\lambda$ is smooth and thus equal to $T_\varphi$. 
\end{proof}

Up to the proof of Lemma \ref{not0}, we now have proved \cite[Theorem 3.1]{Chi}. While doing so, we also found minimal sets of local generators of the scheme $\Gamma_\lambda$, which is closely related to $X_\lambda$, but of a much nicer structure. We will give an explicit description of $\Theta_0, \ldots, \Theta_d$ and some missing details in Section \ref{expl}. But first, we take a look a the singular locus of $X_\lambda$. Chipalkatti already gave an account of it in \cite{Chi}; here, we give a different description and obtain a simple combinatorial criterion to decide if $X_\lambda$ is smooth.

\begin{rem}\label{IdGen}
 The generators $\theta_{0,\ul0}z_{j,0}-\theta_{j,\ul0}$ of $\Gamma_\lambda\cap U_{\ul0}$ for $\ul0 = (0, \ldots, 0) \in \Lambda$ also can be used to compute equations defining $X_\lambda$. Indeed, Proposition \ref{Prop} yields $\theta_{0,\ul\alpha}(g) \neq 0$ for all $\ul\alpha \in \Lambda$ with $\alpha_0 = 0$ and all closed points $q \in \Gamma_\lambda\cap U_{\ul0}$. We can use this to show that under the projection $T \to \PP(\Sym^dV)$, the image of $\Gamma_\lambda\cap U_{\ul0}$ is $X_\lambda\cap U_0$, the affine variety of binary forms $F$ with $a_0 = 1$. Hence, we gain equations defining $X_\lambda\cap U_0$ by eliminating the coordinates of $\PP(\Sym^{e_1}V), \ldots, \PP(\Sym^{e_d}V)$ from the equations $\theta_{0,\ul0}z_{j,0}-\theta_{j,\ul0}=0$. As $X_\lambda\cap U_0$ is dense in $X_\lambda$, homogenizing its equations in $z_0$ yields polynomials defining the ideal $I(X_\lambda) \subseteq K[z_0, \ldots, z_d]$ of $X_\lambda \subseteq \PP(\Sym^dV)$.
 \end{rem}

\section{On the singular locus of $X_\lambda$}\label{Sing}

The singular locus of $X_\lambda$ is a subset of $\bigcup_\mu X_\mu$ where the union runs over all partitions $\mu$ of $d$ such that $\lambda$ is a proper refinement of $\mu$; we call a partition $\mu$ of $d$ a {\it coarsening of $\lambda$} if $\lambda$ is a refinement of $\mu$. Observe that in this case $X_\mu \subseteq X_\lambda$. For example, the partitions $(4)$, $(1,3)$, and $(2,2)$ are coarsenings of $(1,1,2)$. Chipalkatti gave a description of the singular locus of $X_\lambda$ in term of coarsenings of $\lambda$ in \cite[Section 5]{Chi}; based on this results, I want to add another description of the same kind that might seem somewhat more intuitive.\\
For any closed point $F \in \PP(\Sym^dV)$, there is exactly one partition $\mu$ of $d$ such that $F \in X_\mu$ but $F \notin X_\omega$ for all coarsenings $\omega$ of $\mu$ with $\omega \neq \mu$; denote $X_\mu^\circ = X_\mu\backslash\bigcup_\omega X_\omega$ where $\omega$ runs through all coarsenings $\omega \neq \mu$. Let $\mu = (\mu_1, \ldots, \mu_c)$ be a coarsening of $\lambda = (\lambda_1, \ldots, \lambda_d)$. We can group the entries $\lambda_i$ into $c$ partitions $\delta^1, \ldots, \delta^c$ of $\mu_1, \ldots, \mu_c$, respectively; we call $\delta^1, \ldots, \delta^c$ a {\it splitting of $\mu$ into $\lambda$}. For example, if $\mu = (2,2)$ and $\lambda = (1,1,2)$, then we can group the entries of $\lambda$ either into partitions $\delta^1=(1,1)$ of $\mu_1 = 2$ and $\delta^2 = (2)$ of $\mu_2 = 2$ or into partitions $\gamma^1=(2)$ of $\mu_1$ and $\gamma^2 = (1,1)$ of $\mu_2$. Note that the splittings $\delta^1,\delta^2$ and $\gamma^1,\gamma^2$ are not equal since we assume the order of the entries of $\mu$ to be fixed. We call a partition $\mu = (\mu_1, \ldots,\mu_c)$ {\it even} if $\mu_1 = \cdots = \mu_c$.\\
Assume that the closed point $F \in X_\lambda$ is nonsingular. Then the preimage $f_\lambda^{-1}(F) \subset Y_\lambda $ contains exactly one point $G = (G_1, \ldots, G_d)$, and the induced map $df_{\lambda}:T_{Y_\lambda,G} \to T_{\PP(\Sym^dV),F}$ is an isomorphism (compare \cite[Proposition 5.1]{Chi}). Hence, there are two ways for a closed point $F$ of $X_\lambda$ to be singular: Its preimage contains more than one closed point, or there is a closed point $G \in f^{-1}_\lambda(F)$ such that $T_{Y_\lambda,G} \to T_{\PP(\Sym^dV),F}$ is not an isomorphism. Geometrically, the former corresponds to singularities of a nodal type, the latter to singularities similar to a cusp. Note that in our situation, both cases can occur simultaneously.

\begin{thm}
 Let $F \in X_\lambda$ be a closed point, and let $\mu$ be the unique coarsening of $\lambda$ with $F \in X_\mu^\circ$. Then, 
 \begin{enumerate}
  \item[(i)] $F$ is nonsingular in $X_\lambda$ if and only if there is only one splitting $\delta^1, \ldots, \delta^c$ of $\mu$ into $\lambda$, and $\delta^k$ is even for all $k \in \{1, \ldots, c\}$;
 \item[(ii)] the number of closed points in $f_\lambda^{-1}(F)$ equals the number of splittings of $\mu$ into $\lambda$;
 \item[(iii)] there is a closed point $G \in f^{-1}_\lambda(F)$ such that $T_{Y_\lambda,G} \to T_{\PP(\Sym^dV),F}$ is not an isomorphism if and only if there is a splitting $\delta^1, \ldots, \delta^c$ of $\mu$ into $\lambda$ such that $\delta^k$ is not even for some $k \in \{1, \ldots, c\}$.
 \end{enumerate}
\end{thm}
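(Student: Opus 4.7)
The plan is to make everything explicit: describe $f_\lambda^{-1}(F)$ combinatorially, then compute the differential $df_\lambda$ via partial fractions on $\PP^1$, and combine these with the smoothness criterion recalled just before the theorem. Since $F \in X_\mu^\circ$, I would first write $F = \prod_{k=1}^c M_k^{\mu_k}$ with $M_1, \ldots, M_c \in K[x,y]_1$ pairwise non-proportional.

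For (ii), any $(G_1, \ldots, G_d) \in f_\lambda^{-1}(F)$ satisfies $\prod_r G_r^r = \prod_k M_k^{\mu_k}$, so unique factorization in $K[x,y]$ forces $G_r = \prod_k M_k^{b_{r,k}}$ for non-negative integers $b_{r,k}$ with $\sum_k b_{r,k}= e_r$ and $\sum_r r\, b_{r,k} = \mu_k$. Reading $(b_{r,k})_r$ as the partition $\delta^k = (r^{b_{r,k}})_r$ of $\mu_k$, the tuple $(\delta^1, \ldots, \delta^c)$ is a splitting of $\mu$ into $\lambda$; conversely every splitting arises from a unique $G$ in this way. Hence $|f_\lambda^{-1}(F)|$ equals the number of splittings, proving (ii).

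For (iii), the differential at such a $G$ is
\[df_\lambda([\delta G_1], \ldots, [\delta G_d]) \;=\; \sum_{r=1}^d r\,\frac{F}{G_r}\,\delta G_r \pmod{KF},\]
so, dividing by $F$, a tangent vector lies in $\ker df_\lambda$ iff $\sum_r r\, \delta G_r / G_r \in K$ as rational functions on $\PP^1$. Partial fractions at the $M_k$ give
\[\frac{\delta G_r}{G_r} \;=\; c_0^{(r)} + \sum_{k=1}^c \sum_{j=1}^{b_{r,k}} \frac{c_{r,k,j}}{M_k^j},\]
and a dimension count ($\sum_k b_{r,k} = e_r$) shows that the $(c_{r,k,j})_{k,j}$ parametrize $[\delta G_r]$ freely in $T_{\PP(\Sym^{e_r}V),G_r}$, while the constant summand $c_0^{(r)}$ corresponds to the direction $KG_r$ which is killed in the quotient. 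Collecting pole coefficients, $\ker df_\lambda$ is cut out by the single linear relation $\sum_{r:\, b_{r,k}\geq j} r\, c_{r,k,j} = 0$ for each $(k,j)$ with $j \geq 1$; since every $r \geq 1$, this kernel is nonzero iff the set $\{r : b_{r,k} \geq j\}$ has at least two elements for some $(k,j)$. Unwinding, this happens iff some $\delta^k$ contains parts of two distinct sizes, i.e.\ is not even, giving (iii).

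Part (i) then follows by combining (ii), (iii), and the smoothness criterion recalled above the theorem: $F$ is nonsingular iff $|f_\lambda^{-1}(F)|=1$ and $df_\lambda$ is an isomorphism at that unique preimage, which by (ii) and (iii) translates into the existence of exactly one splitting all of whose $\delta^k$ are even. The main technical point is the partial-fractions step: one must check that the linear map $\delta G_r \mapsto \delta G_r / G_r$ identifies $\Sym^{e_r}V$ with the $(1+e_r)$-dimensional space of rational functions on $\PP^1$ with polar divisor bounded by $\sum_k b_{r,k} M_k$, and that the constant summand corresponds precisely to the quotient by $K G_r$. This is elementary but requires some care with homogeneous coordinates; everything else in the argument is bookkeeping on the combinatorics of the splitting.
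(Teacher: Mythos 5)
Your proof is correct, and parts (i) and (ii) follow essentially the same route as the paper: the bijection between closed points of $f_\lambda^{-1}(F)$ and splittings of $\mu$ into $\lambda$, obtained from unique factorization of $F=\prod_k M_k^{\mu_k}$, is exactly the correspondence used there, and (i) is deduced from (ii), (iii) and the smoothness criterion quoted before the theorem (used, as in the paper, as an equivalence via Chipalkatti's Proposition 5.1). The genuine difference is in (iii): the paper translates ``some $\delta^k$ is not even'' into the existence of a linear factor $L$ of $F$ dividing two distinct $G_r$, $G_{r'}$ and then simply cites Chipalkatti's Corollary 5.8 for the degeneracy of $df_\lambda$, whereas you prove the degeneracy criterion directly by writing $df_\lambda(\delta G_1,\ldots,\delta G_d)=\sum_r r (F/G_r)\,\delta G_r \bmod KF$ and analyzing $\sum_r r\,\delta G_r/G_r$ by partial fractions. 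The computation checks out: $\delta G_r\mapsto \delta G_r/G_r$ identifies $\Sym^{e_r}V$ with the $(e_r+1)$-dimensional space of rational functions with polar divisor bounded by $\sum_k b_{r,k}M_k$, the constant summand accounts exactly for $KG_r$, and since the linear conditions decouple over the poles $(M_k,j)$ and have nonzero coefficients $r$, the kernel is nonzero precisely when some set $\{r: b_{r,k}\geq j\}$ has at least two elements, which happens if and only if some $\delta^k$ has parts of two distinct sizes, i.e.\ is not even --- equivalent to the common-factor condition the paper feeds into Corollary 5.8. Your version buys self-containedness (no appeal to that corollary) at the cost of the partial-fraction bookkeeping, which you rightly flag as the delicate step; the paper's version is shorter but leans on Chipalkatti. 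One caution common to both arguments: ``$T_{Y_\lambda,G}\to T_{\PP(\Sym^dV),F}$ is an isomorphism'' must be read as ``injective'' (equivalently, an isomorphism onto the tangent space of $X_\lambda$ at $F$), since the ambient tangent space has dimension $d$ while $\dim Y_\lambda=e$ is smaller in general; your kernel computation addresses exactly this injectivity, so your argument is consistent with the intended meaning.
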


\begin{proof} 
 Obviously, (i) follows from (ii) and (iii). Let $L_1, \ldots, L_c \in \PP(V)$ be linear forms with $F = \prod_{i=1}^cL_i^{\mu_i}$. For a splitting $\delta^1, \ldots,  \delta^c$, where $\delta^i$ has $b_i$ entries, we can write \[F = \prod_{i=1}^c\prod_{j=1}^{b_i}L_i^{\delta^i_j}.\] We get a closed point $(G_1, \ldots, G_d) \in f^{-1}_\lambda(F)$ by \[G_r \defin \prod_{i=1}^c\prod_{j:\delta^i_j=r}L_i\] for $r \in \{1, \ldots, d\}$. On the other hand, given a closed point $(G_1, \ldots, G_d) \in f^{-1}_\lambda(F)$, we can determine a splitting of $\mu$ into $\lambda$ as follows: For $i \in \{1, \ldots, c\}$, denote $\varepsilon_r^i$ the highest integer $\varepsilon$ with $L_i^{\varepsilon}|G_r$. Then, $\delta^i \defin (1^{\varepsilon_1^i}\ldots \mu_i^{\varepsilon_{\mu_i}^i})$ is a partition of $\mu_i$, and $\delta^1, \ldots, \delta^c$ is a splitting of $\mu$ into $\lambda$. Thus, we get a 1-to-1 correspondence between the closed points in $f_\lambda^{-1}(F)$ and the splittings of $\mu$ into $\lambda$, proving (ii).\\
 For (iii), let $\delta^1, \ldots, \delta^c$ be a splitting of $\mu$ into $\lambda$, and denote $(G_1, \ldots, G_d) \in f^{-1}_\lambda(F)$ the closed point corresponding to this splitting. Then, by above correspondence, $\delta^k$ is not even for $k \in \{1, \ldots, c\}$ if and only if there is a linear factor $L$ of $F$ with $L|G_r$ and $L|G_{r'}$ for $r \neq r'$. Using \cite[Corollary 5.8]{Chi}, we get our claim.
\end{proof}

Using this Proposition and the correspondence between splittings and closed points in $f^{-1}_\lambda(F)$ in its proof, it is straightforward to prove the next two Corollaries.

\begin{cor}
 $X_\lambda$ is smooth if and only if $\lambda$ is even.
\end{cor}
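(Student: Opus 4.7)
The plan is to deduce this directly from part (i) of the preceding Theorem applied to every closed point of $X_\lambda$. Smoothness of $X_\lambda$ is equivalent to every closed point $F \in X_\lambda$ being nonsingular, which in turn amounts to condition (i) holding for the unique coarsening $\mu$ of $\lambda$ with $F \in X_\mu^\circ$. As $F$ varies over $X_\lambda$, the coarsening $\mu$ can be any coarsening of $\lambda$: indeed, given a coarsening $\mu = (\mu_1, \ldots, \mu_c)$ and pairwise non-proportional linear forms $L_1, \ldots, L_c$, the form $F = \prod_{i=1}^c L_i^{\mu_i}$ lies in $X_\mu^\circ$. Thus, $X_\lambda$ is smooth if and only if, for every coarsening $\mu$ of $\lambda$, $\mu$ admits exactly one splitting into $\lambda$ and this splitting consists of even partitions only.

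For the ``if'' direction, assume $\lambda = (r^e)$ is even. Any coarsening $\mu$ of $\lambda$ is obtained by grouping $r$'s, so $\mu = (k_1 r, \ldots, k_c r)$ with $\sum_i k_i = e$. Since every part of $\lambda$ equals $r$, the only way to split $\mu$ into $\lambda$ is to assign $k_i$ copies of $r$ to $\mu_i$, giving $\delta^i = (r^{k_i})$, which is even. Hence the Theorem's criterion (i) is satisfied at every closed point, and $X_\lambda$ is smooth.

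For the ``only if'' direction, suppose $\lambda$ is not even, so there exist parts $\lambda_i < \lambda_j$ of $\lambda$. Let $\mu$ be the partition obtained from $\lambda$ by replacing these two entries with the single entry $\lambda_i + \lambda_j$; then $\mu$ is a coarsening of $\lambda$, and the splitting that groups $\lambda_i$ and $\lambda_j$ into the part $\mu_k = \lambda_i+\lambda_j$ contains the non-even partition $(\lambda_i,\lambda_j)$. Choosing $F \in X_\mu^\circ$ therefore yields a closed point of $X_\lambda$ at which criterion (i) fails, so $X_\lambda$ is singular at $F$.

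The only step requiring care is ensuring that the chosen $F$ really lies in $X_\mu^\circ$ and not in some strictly coarser stratum, i.e.\ that no unexpected coincidences between the $L_k$ occur; this is achieved by picking the $L_k$ generically (pairwise non-proportional suffices, since any further coincidence would refine $\mu$ to a strictly coarser partition, contradicting generic choice). Apart from this genericity remark, the argument is purely combinatorial on partitions.
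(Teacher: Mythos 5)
Your argument is correct and is exactly the route the paper intends: the paper leaves this Corollary as a ``straightforward'' consequence of part (i) of the Theorem together with the splitting--fibre correspondence, and your case analysis (unique even splittings for every coarsening of an even $\lambda$; merging two unequal parts to produce a non-even splitting when $\lambda$ is not even) supplies precisely the missing routine details, including the correct observation that $F=\prod_i L_i^{\mu_i}$ with pairwise non-proportional $L_i$ lies in $X_\mu^\circ$. Only a wording quibble: a further coincidence among the $L_i$ would make the actual multiplicity structure strictly \emph{coarser} than $\mu$, not ``refine'' it, but this does not affect the argument.
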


\begin{cor}
 $X_\lambda$ is either smooth, or the singular locus of $X_\lambda$ is of codimension 1 in $X_\lambda$.
\end{cor}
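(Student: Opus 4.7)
The plan is to combine the preceding Corollary with a dimension count and part (iii) of the Proposition in this section. If $\lambda$ is even, then the preceding Corollary gives $X_\lambda$ smooth and there is nothing to prove. Assume henceforth that $\lambda$ is not even. Since $X_\lambda$ is integral and not smooth, its singular locus is a proper nonempty closed subset, so automatically of codimension at least $1$; I only need to produce an irreducible subvariety of codimension $1$ contained in the singular locus.

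The key construction is to pick two unequal parts $\lambda_i\neq\lambda_j$ of $\lambda$ (which exist because $\lambda$ is not even) and merge them into a single part $\lambda_i+\lambda_j$, obtaining a coarsening $\mu$ of $\lambda$ with $c=e-1$ parts. Since $Y_\lambda=\prod_r\PP(\Sym^{e_r}V)$ is the normalization of $X_\lambda$ and likewise $Y_\mu$ is the normalization of $X_\mu$, I immediately read off $\dim X_\lambda=e$ and $\dim X_\mu=c=e-1$, so $X_\mu$ is a divisor in $X_\lambda$.

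It then remains to show that a generic closed point $F\in X_\mu^\circ$ is singular in $X_\lambda$. For such $F$, the partition $\mu$ is the unique minimal coarsening with $F\in X_\mu$. Consider the splitting of $\mu$ into $\lambda$ that assigns the group $(\lambda_i,\lambda_j)$ to the merged part $\lambda_i+\lambda_j$ and the singleton group consisting of the part itself to every other part of $\mu$; the block $\delta^k=(\lambda_i,\lambda_j)$ is not even because $\lambda_i\neq\lambda_j$. Part (iii) of the previous Proposition then forces $F$ to be singular in $X_\lambda$. Since the singular locus of $X_\lambda$ is closed and $X_\mu^\circ$ is dense in $X_\mu$, the full divisor $X_\mu$ lies in it, so the singular locus has codimension exactly $1$.

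The main conceptual point is the correct choice of coarsening: merging two \emph{unequal} parts is what simultaneously yields a codimension-$1$ subvariety $X_\mu$ and a non-even splitting block to which part (iii) of the Proposition applies. Once this choice is identified, no further technical obstacle arises --- the rest is a direct dimension calculation on $Y_\mu$ and an appeal to the singularity criterion already established.
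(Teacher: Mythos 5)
Your proof is correct and follows exactly the route the paper intends: the paper leaves this Corollary as a ``straightforward'' consequence of the Proposition on splittings, and your argument --- merge two unequal parts of $\lambda$ to get a coarsening $\mu$ with $X_\mu$ of codimension $1$, then use the non-even splitting block and part (iii) (equivalently the failure of the criterion in (i)) to see that all of $X_\mu^\circ$, hence $X_\mu$, lies in the singular locus --- is precisely that intended argument, including the correct dimension count via the normalizations $Y_\lambda$ and $Y_\mu$.
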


\begin{ex}
 We give an account of the singular loci of $X_\lambda$ for all partition $\lambda$ of $5$ in the following diagram:
 \[\xymatrix{
  &&(1,1,1,1,1)\ar@{ -- }[d]\\
  &&(1,1,1,2) \ar@{{*}}[rd]^2\ar@{{*}}[ld]_\ast\ar@/_2pc/@{{*}}[lldd]_\ast\ar@/^2pc/@{{*}}[rrdd]^{2\ast}\ar@{{*}}[ddd]^\ast\\
  &(1,1,3)\ar@{{*}}[ld]_\ast\ar@{ -- }[rrrd]\ar@{{*}}[rdd]_\ast&&(1,2,2)\ar@{{*}}[rd]^\ast\ar@{ -- }[llld]\ar@{{*}}[ldd]^\ast\\
  (1,4)\ar@{{*}}[rrd]_\ast&&&&(2,3)\ar@{{*}}[lld]^\ast\\
  &&(5)
  }\]
 In this diagram, any line between two partitions $\lambda$ above and $\mu$ below means that $X_\mu \subset X_\lambda$; we omit most of the lines connected to the trivial partition $(1^5)$. A dashed line means that closed points of $X_\mu^\circ$ are non-singular in $X_\lambda$, while a line with a $\bullet$ means that $X_\mu^\circ$ is contained in the singular locus of $X_\lambda$. The label of such a line indicates the type of singularity: A number denotes the number of points in the preimages of points of $X_\mu^\circ$ under $f_\lambda$ (the ``1'' is not denoted), while an asterisk $\ast$ means that there is (at least) one induced morphism on tangent spaces which is not injective. Also, the dimension of $X_\lambda$ equals the number of its row counted from below, e.g., $\dim(X_{(1,1,1,2)}) = 4$ as $(1,1,1,2)$ can be found in the fourth row from the bottom.
\end{ex}

\section{Local Generators for $\Gamma_\lambda$}\label{expl}

We use the notations from Section \ref{Intr}. In particular, let $\lambda = (1^{e_1} \ldots d^{e_d})$. We now want to determine an explicit form of the polynomials $\Theta_0, \ldots, \Theta_d$ occurring in (\ref{Theta}); we will do so by comparing coefficients. For $r \in \{1, \ldots, d\}$, denote by $W_{r,0}, \ldots, W_{r,e_r}$ coordinates on $\PP(\Sym^{e_r}V)$. We use the notations $\ul m$ for a tuple $(m_0, \ldots, m_d)$ and $\ul{\ul m}$ for a tuple of tuples, e.g., $\ul{\ul W} = (\ul W_1, \ldots, \ul W_d)$ is the collection of the indeterminates $W_{r,t}$. As the computation of the polynomials $\Theta_0, \ldots, \Theta_d$ in the indeterminates $\ul{\ul W}$ is a rather technical affair, we first give an example.

\begin{ex}
 Let $d = 7$ and $\lambda = (2,2,3)$; we ignore $r = 1,4,5,6,7$ as $e_1=e_4=\cdots=e_7=0$. A binary form $F = \sum_{j=1}^7a_jx^{7-j}y^j \in X_{(2,2,3)}$ can be written $F = G_2^2\cdot G_3^3$ with $G_2 = b_{2,0}x^2+b_{2,1}xy+b_{2,2}y^2\in K[x,y]_2$ and $G_3 = b_{3,0}x+b_{3,1}y \in K[x,y]_1$. Obviously $a_0 = b_{2,0}^2b_{3,0}^3$, hence $\Theta_0=W_{2,0}^2W_{3,0}^3$. The coefficient $a_1$ is a sum of factors of the form $b_{2,0}b_{2,1}b_{3,0}^3$ or $b_{2,0}^2b_{3,0}^2b_{3,1}$; more precisely, by expanding $G_2^2\cdot G_3^3$, we find $a_1 = 2b_{2,0}b_{2,1}b_{3,0}^3+3b_{2,0}^2b_{3,0}^2b_{3,1}$, thus $\Theta_1 = 2W_{2,0}W_{2,1}W_{3,0}^3+3W_{2,0}^2W_{3,0}^2W_{3,1}$. In $a_2$, the summand $b_{2,0}b_{2,1}b_{3,0}^2b_{3,1}$ occurs six times: Indeed, looking at the expansion of $G_2^2\cdot G_3^3$, we can choose one of two instances of $b_{2,1}$ in $G_2\cdot G_2= (b_{2,0}x^2+b_{2,1}xy+b_{2,2}y^2)\cdot(b_{2,0}x^2+b_{2,1}xy+b_{2,2}y^2)$ and one of three instances of $b_{3,1}$ in $G_3\cdot G_3\cdot G_3$ and multiply with the coefficients $b_{2,0}$ and $b_{3,0}$ appearing in the remaining factors of $G_2^2$ and $G_3^3$, respectively.
\end{ex}

We return to a general partition $\lambda$ of $d \in \NN$. By comparing coefficients, we immediately get \[\Theta_0 = \prod_{r=1}^dW_{r,0}^{r} \quad\text{ and }\quad \Theta_d = \prod_{r=1}^dW_{r,e_r}^{r}.\] 
Next, we can obtain $\Theta_1$ from $\Theta_0$ by replacing exactly one factor $W_{r,0}$ by $W_{r,1}$ and adding all possible products obtained in this way, i.e., \[\Theta_1 = \sum_{r=1}^d\left(\beta_r W_{r,1}W_{r,0}^{r-1}\prod_{s:s\neq r}W_{s,0}^s\right)\] where the coefficients $\beta_r \in \NN$ occur because the factor $W_{r,0}$ can appear more than once in the product $\Theta_0$. We now get $\Theta_2$ from $\Theta_1$ by taking the first summand (ignoring $\beta_1$), replacing either $W_{1,1}$ by $W_{1,2}$ or else some $W_{r,0}$ by $W_{r,1}$, and adding this new monomials together; then, we continue with the second summand, etc. Again, we will have to find some coefficients $\ul\beta$ as different replacements might yield equal summands. We also note by comparing coefficients that for a summand $\ul{\ul W}^{\ul{\ul\nu}}$ of $\Theta_j$, the sum over all powers $\nu_{r,t}$ times the index $t$ equals $j$, i.e., $\sum_{r,t} t\nu_{r,t} = j$, while in each summand exactly $r$ factors of the form $W_{r,\bullet}$ occur. Altogether, by defining a set of multi-exponent \[N_j \defin \left\{\ul{\ul \nu} \in \Nat^{e_1+1}\times \cdots \times \Nat^{e_d+1} \,\left|\, \begin{array}{l}  \left(\forall r \in \{1, \ldots,d\}: \sum_{t=0}^{e_r} \nu_{r,t} = r\right)\\ \wedge  \sum_{r=1}^d\left(\sum_{t=0}^{e_r}t\nu_{r,t}\right)=j\end{array}\right.\right\}\] for $j \in \{0, \ldots, d\}$, we get 
\begin{equation}\label{sum}\Theta_j = \sum_{\ul{\ul\nu}\in N_j}\beta(\ul{\ul\nu})\ul{\ul W}^{\ul{\ul\nu}} = \sum_{\ul{\ul\nu}\in N_j}\left(\beta_(\ul{\ul\nu})\prod_{r=1}^d\prod_{t=0}^{e_r}W_{r,t}^{\nu_{r,t}}\right)\end{equation}
for some integers $\beta(\ul{\ul\nu}) \in \NN$. It remains to determine this integer for a fixed $\ul{\ul\nu} \in N_j$. We recall that we have to look at the product $G_1\cdot G_2 G_2\cdots G_d\cdots G_d$, where $G_r$ appears $r$ times. To get a summand $\ul{\ul W}^{\ul{\ul\nu}}$, we first choose $\nu_{1,0}$ instances of $b_{1,0}$ from the 1 appearance of $G_1$, then $\nu_{1,1}$ instances of $b_{1,1}$ from the remaining $1 - \nu_{1,0}$ appearances of $G_1$, etc. If we are done with this, we continue by choosing $\nu_{2,0}$ instances of $b_{2,0}$ from the 2 appearances of $G_2$, then $\nu_{2,1}$ instances of $b_{2,1}$ from the remaining $2 - \nu_{1,0}$ appearances of $G_2$, etc., until we arrive at choosing $\nu_{d,e_d}$ instances of $b_{d,e_d}$ from the last $d - \nu_{d,0} -\cdots-\nu_{d,e_d-1}$ appearances of $G_d$.
Doing the combinatorics, we get \[\beta(\ul{\ul\nu}) = \prod_{r=1}^d\frac{r!}{\nu_{r,1}!\cdots\nu_{r,r}!}.\]
Together with this formula for $\beta(\ul{\ul\nu})$, equation (\ref{sum}) is an explicit form of $\Theta_j$, and for any $\ul\alpha \in \Lambda$, we obtain $\theta_{j,\ul\alpha}$ by mapping $W_{r,\alpha_r} \mapsto 1$ and $W_{r_,t} \mapsto w_{r,t}$ for $r \in \{1, \ldots, d\}$ and $t \neq \alpha_r$, where $\ul w_r$ are coordinates on $U_{\alpha_r} \subset \PP(\Sym^{e_r}V)$.

\begin{rem}
 Consider the grading induced on $K[z_0, \ldots, z_d]$ by the weight $\omega = (0, \ldots, d)$, i.e., $K[\ul z] = \bigoplus_{m \in \Nat}K[\ul z]_{\omega,m}$ with $z_j \in K[\ul z]_{\omega,j}$ for $j \in \{0, \ldots, d\}$. Then, the ideal $I(X_\lambda) \subseteq K[\ul z]$ is graded with respect to the grading induced by $\omega$. Indeed, consider the homogeneous coordinate ring $K[\ul z, \ul{\ul w}]$ of $\Gamma_\lambda\cap U_{\ul 0}$ as in Remark \ref{IdGen}. We furnish $K[\ul z,\ul{\ul w}]$ with the grading induced by the weight $\tilde\omega$ with $\widetilde\omega(z_j) = j$ for $j \in \{1, \ldots, d\}$ and $\widetilde\omega(w_{r,t}) = t$ for $r \in \{1, \ldots, d\}, t \in \{1, \ldots, e_r\}$. Then, $\omega = \widetilde\omega\res_{K[\ul z]}$, and $K[\ul z]$ is a graded subring of $K[\ul z, \ul{\ul{w}}]$. We can use the above explicit description of $\Theta_0, \ldots, \Theta_d$ to compute $\widetilde\omega(\theta_{j, \ul 0}) = j$, hence $z_j-\theta_j$ is homogeneous for all $j \in \{1, \ldots, d\}$. As the polynomials  $z_j-\theta_j$ generate the ideal $I(\Gamma_\lambda\cap U_{\ul 0})$, this ideal is graded with respect to the grading induced by $\widetilde\omega$. Thus, the ideal $I(X_\lambda\cap U_0) = I(\Gamma_\lambda\cap U_{\ul 0})\cap K[z_1, \ldots, z_d]$ is also graded with respect to the grading induced by $\widetilde\omega\res_{K[\ul z]}$. As we put $\omega(z_0) = 0$, homogenizing in $z_0$ does not affect the weight of an element of $K[\ul z]$, hence $I(X_\lambda)$ is graded with respect to the grading induced by $\omega$ (compare Remark \ref{IdGen}).
\end{rem}

Our considerations finally allow us to prove Lemma \ref{not0}, which was crucial in the proof Proposition \ref{Prop}.

\begin{proof}[Proof of Lemma \ref{not0}]
 Let $\ul\alpha\in\Lambda$, and let $q \in U_{\ul\alpha}$ be a closed point. It is obvious that the second row of $(\varphi_{\ul\alpha})(q)$ does not vanish. Assume that the first row only contains zeros. The point $q$ is of the form $(F,G_1, \ldots, G_r) \in \prod_{r=0}^d\PP(\Sym^{e_r}V)$ with $G_r = (b_{r,0}:\cdots:b_{r,e_r})$ as before. Our assumption now implies \[\prod_{r=1}^db_{r,0}^r = \theta_0(q) = 0.\] Hence, the set \[M_0 \defin \{r \in \{1, \ldots, d\} \mid b_{r,0} = 0\}\] is not empty. 
 The summands of $\theta_1(q)$ are obtained (up to their multiplicity $\beta(\cdot)$) by shifting exactly one factor $b_{r,0}^r$ of $\theta_0(q)$ to $b_{r,0}^{r-1}b_{r,1}$; the summands of $\theta_2(q)$ are in turn (again, up to multiplicity) obtained by taking one of the summands of $\theta_1(q)$ and shifting one factor $b_{r,t}$ to $b_{r,t+1}$ and so on. Hence, after $t_1 \defin \sum_{r \in M_0} r$ such steps, we find \[\theta_{t_1}(q) = \left(\prod_{r \in M_0}b_{r,1}^r\right)\left(\prod_{r \in \{1, \ldots, d\}\backslash M_0}b_{r,0}^r\right) + C,\] where $C$ is a sum of products obtained by $t_1$ shifts $b_{r,t}\mapsto b_{r,t+1}$ of which at least one occurs for $r \notin M_0$. But this means that every summand of $C$ contains a factor $b_{r,0}$ for some $r \in M_0$; thus $C = 0$. Now, as by assumption $0 = \theta_{t_1}(q)$ and by construction $\prod_{r \notin M_0}b_{r,0} \neq 0$, it follows \[M_1 \defin \{r \in M_0 \mid b_{r,1} = 0\} \neq \emptyset.\] Repeating this arguments, we construct a chain of sets $M_0 \supseteq M_1 \supseteq \cdots \supseteq M_k \supseteq\cdots$ with $M_k \neq \emptyset$. But, after at most $k \leq \max\{e_r \mid r \in M_0\}$ such steps, we find an element $r \in M_{k}$ with $b_{r,0} = \cdots = b_{r,e_r}=0$, a contradiction to $(b_{r,0}:\cdots:b_{r,e_r}) \in \PP(\Sym^{e_r}V)$ a closed point. Hence, there is at least one $j \in \{0, \ldots, d\}$ with $\theta_j(g) \neq 0$.
\end{proof}

\begin{rem}
 The definition of the morphism $\varphi$ in this paper is of a constructive nature as suggested in \cite[Section 3.1]{Chi}, while Chipalkatti gives a more elegant one in \cite[Section 3.2]{Chi}. We needed this explicit definition to get the generators for $\Gamma_\lambda$. But it actually might be that the morphisms defined here and in \cite[Section 3.2]{Chi} are not equal. Indeed, it seems that explicit computations using Chipalkatti's definition do not yield the needed coefficients $\beta(\cdot)$. As I computed the discriminant for quadrics using Chipalkatti's definition of $\varphi$, I got $b^2-ac$ because of this absence of the $\beta(\cdot)$. This difference in definition, if it indeed exists, would not be of real significance since the relevant result in \cite{Chi} is that $\sO_{\Gamma_\lambda}$ is resolved by the Eagon-Northcott complex of a morphism $\sO_T^{d+1}\to\sO_T^2 $. Using the definition of $\varphi$ given in our paper, \cite[Theorem 3.1]{Chi} certainly can be proved.
\end{rem}

{\bf Acknowledgments}

I thank my PhD advisor Markus P. Brodmann for his support during my foray into the topic of coincident root loci. I also thank Felix Fontein for being the first reader of this paper.

\end{document}